\documentclass[article]{elsarticle}
\usepackage{framed}
\addcontentsline{toc}{section}{Acknowledgement}
\usepackage{geometry}
\geometry{left=2.0cm,right=2.0cm,top=2.0cm,bottom=2.0cm}
\usepackage{lineno,hyperref}
\usepackage{amssymb,amsmath,amsthm}
\usepackage{mathrsfs}
\numberwithin{equation}{section}
%\modulolinenumbers[5]
\usepackage{lineno,hyperref}
% \journal{Journal of \LaTeX\ Templates}
\usepackage{graphicx}
\usepackage{subfigure}
\usepackage{float}
\usepackage{color}

%\setlength{\parskip}{-0.5 pt}
%%%%%%%%%%%%%%%%%%%%%%%
%% Elsevier bibliography styles
%%%%%%%%%%%%%%%%%%%%%%%
%% To change the style put a % in front of the second line of the current style and
%% remove the % from the second line of the style you would like to use.
%%%%%%%%%%%%%%%%%%%%%%%

%% Numbered
%\bibliographystyle{model1-num-names}

%% Numbered without titles
%\bibliographystyle{model1a-num-names}

%% Harvard
%\bibliographystyle{model2-names.bst}\biboptions{authoryear}

%% Vancouver numbered
%\usepackage{numcompress}\bibliographystyle{model3-num-names}

%% Vancouver name/year
%\usepackage{numcompress}\bibliographystyle{model4-names}\biboptions{authoryear}

%% APA style
%\bibliographystyle{model5-names}\biboptions{authoryear}

%% AMA style
%\usepackage{numcompress}\bibliographystyle{model6-num-names}
\newtheorem{theorem}{Theorem}[section]
\newtheorem{lemma}{Lemma}[section]

%\doublespace

\newtheorem{thm}{Theorem}[section]

\newtheorem{remark}[thm]{Remark}
\newtheorem{definition}{Definition}[section]
%%%%%%%%%%%%%%%%

% Math Notations ---------------------------------------------------------

\def\<{\left\langle} \def\>{\right\rangle}
\def\\left({\left\left(} \def\\right){\right\right)}

% \renewcommand{\P}{\mathcal{P}}
% \newcommand{\g}{\mathfrak{g}}
% \newcommand{\D}{\mathbb{D}}
%% `Elsevier LaTeX' style
\bibliographystyle{elsarticle-num}
%%%%%%%%%%%%%%%%%%%%%%%

% Place all authors' names in [ ] shown as running head, Leave { } empty
% Please use `and' to connect the last two names if applicable
% Use FirstNameInitial.  MiddleNameInitial. LastName, or only last names of authors if there are too many authors
%\author[first-name1 last-name1 and first-name2 last-name2]{}

% It is required to enter the 2010 MSC.
%\subjclass[2010]{Primary: 35K51; Secondary: 35R35, 35B40, 92D30.}
% Please provide a minimum of 5 keywords.
%\keywords{West Nile virus, Almost periodicity, Free boundary, Basic reproduction number, Global dynamic}

% Email address of each of all authors is required.
% You may list the email addresses of all other authors, separately.

%\email{chengchengcheng16@mails.ucas.ac.cn}
%\email{zhzheng@amt.ac.cn}

% Put your short thanks below. For long thanks/acknowledgments,
%please go to the last acknowledgments section.

\begin{document}
	
	\begin{frontmatter}

	\title{{Spreading speeds of a nonlocal diffusion model with free
boundaries in the time almost periodic
media}
			 \tnoteref{mytitlenote}}

		%% or include affiliations in footnotes:
		%\author{\scriptsize{Chengcheng Cheng} }
		%\ead{chengchengcheng@amss.ac.cn}
			%\cortext[mycorrespondingauthor]{Corresponding author}
		
\author{{ Chengcheng Cheng}\corref{mycorrespondingauthor}}
\cortext[mycorrespondingauthor]{Corresponding author}
\ead{chengchengcheng@amss.ac.cn}
\author{{ Rong Yuan}}
\ead{ryuan@bnu.edu.cn}

		\address{\scriptsize{ School of Mathematical Sciences, 
			Laboratory 
			of 
			Mathematics 
			and 
			Complex Systems, Ministry of Education, \\ Beijing Normal University, Beijing 
			100875, China} }

	% The name of the associate editor will be entered by an editorial staff
	% "Communicated by the associate editor name" is not needed for special issues.
	%\centerline{\left(Communicated by the associate editor name\right)}
	
	%The abstract of your paper
 \begin{abstract}
 { In this paper, we mainly investigate the spreading dynamics of a nonlocal diffusion KPP model with free boundaries which is firstly explored in time almost periodic media.
  As the spreading occurs, the long-run dynamics are obtained. Especially, when the threshold condition 
for the kernel function is satisfied, applying the novel positive time almost periodic function, we accurately express the unique asymptotic spreading speed of the free boundary problem.}
\end{abstract} 
	
	\begin{keyword} 
	 { Nonlocal diffusion\sep Free boundary \sep Time almost periodic media \sep Asymptotic spreading speed \sep Time almost periodic solution 
		\MSC[2010] 35B15\sep 35R09\sep35R35\sep35B40}
	\end{keyword} 
\end{frontmatter}
%\linenumbers	
%The title of your section 1
  { \section{Introduction}\label{s1}
 Nonlocal diffusion equations take advantage of modeling the long-distance movements of species and the dense dispersal of populations~\cite{murray2002mathematical, 2023-hutson2003evolution, 2023-kao2010random}.
Considering the environmental change, seasonal succession and resource distribution, people often choose periodic models to describe environmental parameters~\cite{2023-rawal2012criteria, 2023-souganidis2019, 2023-bao2020propagation, 2023-sun2022limiting}. However,
 natural fluctuations are difficult to be periodic.  Almost periodicity is more likely to accurately describe natural recurrent changes and the almost periodic model can provide more biological insights to understand the invasion of populations and the propagation of epidemic diseases~\cite{2023-diagana2007population, 2023-zhao2017dynamical}.
It is well known that the spreading speed is an important indicator describing the scale of infectious disease transmission.  The spreading speeds of nonlocal diffusion equations in the nonhomogeneous media have attracted increasingly more interest and attention in recent years~\cite{2023-shen2010spreading, 2023-liang2020spreading}.
%Especially in space periodic habitats, Shen and Zhang first investigated the spreading speeds for monostable equations with nonlocal dispersal~\cite{2023-shen2010spreading}. Liang and Zhou gave an estimate of the spreading speed of a nonlocal KPP equation in almost periodic media under some assumptions~\cite{2023-liang2020spreading}.  
However, the spreading speed of the nonlocal KPP model with free boundaries in the time almost periodic environment has not been considered, which is a worthwhile problem to study. 
 
 In this paper, we mainly aim to investigate the long-time propagation dynamics and spreading speed of  the following almost periodic nonlocal diffusion model
\begin{equation}\label{eq1-1}
	\left\{\begin{array}{ll}
		u_{ t}=d \int_{g\left(t\right)}^{h\left(t\right)} 
		\kappa\left(x-y\right) u\left(t, y\right) \mathrm{d} y-d 
		u+u f(t,x,u), & t>0,~
		g\left(t\right)<x<h\left(t\right), \vspace{0.1cm} \\
		u\left(t, h(t)\right)=u(t,g(t))=0, & t > 0,  \vspace{0.1cm} \\
		h^{\prime}\left(t\right)= \mu
		\int_{g\left(t\right)}^{h\left(t\right)} 
		\int_{h\left(t\right)}^{\infty} \kappa\left(x-y\right) u\left(t, 
		x\right) \mathrm{d} y\mathrm{d} x, 
		& t 
		> 0, \vspace{0.1cm} \\
		g^{\prime}\left(t\right)=-\mu
		\int_{g\left(t\right)}^{h\left(t\right)} 
		\int_{-\infty}^{g\left(t\right)} \kappa\left(x-y\right) u\left(t, 
		x\right) \mathrm{d} y\mathrm{d} 
		x, & 
		t >0, \vspace{0.1cm}  \\
		u\left(0, x\right)=u_{ 0}\left(x\right),& x\in[-h_{0},h_0], \vspace{0.1cm}
		\\ h\left(0\right)=h_0, g\left(0\right)=-h_{0}, 
	\end{array}\right.
\end{equation}
where the initial function $u_0(x)$ satisfies 
\begin{equation}\label{eq1-2}
	u_{0}(x) \in C(\left[-h_{0}, h_{0}\right]),~ u_{0}\left(-h_{0}\right)=u_{0}\left(h_{0}\right)=0,~ u_{0}(x)>0,~ x\in\left(-h_{0}, h_{0}\right).
\end{equation}
Here $u(t,x)$ represents the population density at time $t$ on location $x$. We assume that the density of species is $0$ out of $[g(t), h(t)]$, where  $g(t)$ and $h(t)$ are the leftward and rightward free boundaries to be determined later, respectively.
The dispersal kernel $\kappa(x-y)$ represents the probability distribution that jumps from location $y$ to $x$, then $\kappa(x-y) u(t, y)$ is the rate at which the species arrive at $x$ from position $y$. 
And $\int_{g(t)}^{h(t)} \kappa (x-y) u(t, y) \mathrm{d} y$ is the rate at which the species reach $x$ from any other positions. 
Correspondingly, $\int_{-\infty}^{\infty} \kappa(x-y) u(t, x) \mathrm{d} y$ is the rate at which the 
species leave $x$ to go to any other positions. Let $d$ be a positive constant as the nonlocal diffusion coefficient, then
$
d\int_{g(t)}^{h(t)} \kappa(x-y) u(t, y) \mathrm{d} y-\int_{-\infty}^{\infty} \kappa(x-y) u(t, x) \mathrm{d} y
$
can be seen as the nonlocal diffusion term.
Moreover, 
similar to \cite{cao2019}, suppose that the expanding rate of moving boundary is proportional to the outward
flux at the boundary, that is,
\begin{equation}\label{gg}
	g^{\prime}\left(t\right)=-\mu
		\int_{g\left(t\right)}^{h\left(t\right)} 
		\int_{-\infty}^{g\left(t\right)} \kappa\left(x-y\right) u\left(t, 
		x\right) \mathrm{d} y\mathrm{d} 
		x, ~ 
		t >0, 
\end{equation}		
		and
\begin{equation}\label{hh}	
		h^{\prime}\left(t\right)= \mu
		\int_{g\left(t\right)}^{h\left(t\right)} 
		\int_{h\left(t\right)}^{\infty} \kappa\left(x-y\right) u\left(t, 
		x\right) \mathrm{d} y\mathrm{d} x, 
		~ t 
		> 0, 
\end{equation}
where $\mu>0$ describes the expanding capability.

Moreover, we assume that the kernel function $\kappa(x): \mathbb R\rightarrow \mathbb R$ satisfies the following properties.
\newline  $(\textbf{H1})$ (1) $\kappa(\cdot) \in C^1(\mathbb{R},[0, \infty))$ is symmetric, $\kappa(0)>0,$ and $\int_{\mathbb{R}} \kappa(x) d x=1$.
\newline (2) There are positive constants $\alpha$ and $x_0$ such that $\kappa(x) \leq e^{-\alpha|x|}$ and $|\nabla \kappa| \leq e^{-\alpha|x|}$ for $|x| \geq x_0$.

The growth function $f(t,x,u): \mathbb R\times \mathbb R\times \mathbb R^+\longrightarrow \mathbb R$ satisfies that for $D\subset \mathbb R$ is smooth bounded or $D=\mathbb R,$
\newline   $(\textbf{H2})$ $f(t, x, u)$ is almost periodic in $t$ uniformly for $x \in \bar{D}$ and $u$ in bounded sets of $\mathbb{R}$. When $D=\mathbb{R}$,  $f(t, x, u)$ is also almost periodic in $x$ uniformly for $t \in \mathbb{R}$ and $u$ in bounded sets.
\newline   $(\textbf{H2}^*)$   (1) $f(t, x, u)$ is $C^1$ in $u$;   
$f(t, x, u)$ and $f_u(t, x, u)$ are uniformly continuous in $(t, x, u) \in \mathbb{R} \times \bar{D} \times E$ for any bounded set $E \subset \mathbb{R} ; f(t, x, u)$ is uniformly bounded in $(t, x) \in \mathbb{R} \times D$ for $u$ in bounded sets. 
\newline (2) There is $M\gg 1$ such that $ f(t, x, u)<0$ for  $(t, x) \in \mathbb{R} \times \bar{D}$ and $u \geq M$;  $f_u(t, x, u)<0$ for $(t, x, u) \in \mathbb{R} \times \bar{D} \times[0, \infty).$

% A simple example for $f(t,x,u)$ is $f(t,x,u)=\alpha_1(t,x)-\alpha_2(t,x) u(t,x)$, where  $\alpha_1(t, x)$ and $\alpha_2(t, x)$ are almost periodic in $t$ and periodic in $x \in \mathbb{R}$, 
% with $\inf\limits_{t \in \mathbb{R}, x \in \mathbb{R}} \alpha_2(t, x)>0.$

Let $a(t,x)=f(t,x,0)$ satisfy
\newline  $(\textbf{H3})$ $a(t, x)$ is bounded and uniformly continuous in $(t, x) \in \mathbb{R} \times \bar{D}$, and is almost periodic in $t$ uniformly for $x \in \bar{D}$.
When $D=\mathbb R$, $a(t, x)$ is also almost periodic in $x$ uniformly for $t \in\mathbb R$.

% \newline $(\textbf{H3}^*)$ $a(t,x)$ is limiting almost periodic in $t$ uniformly with $x\in \bar D$ and is limiting almost periodic in $x$ when $D=\mathbb R$ (see Definition~\ref{de2-1}).

Additionally, for the principal Lyapunov exponent as the threshold condition in studying the spreading and vanishing of $(\ref{eq1-1})$, we assume that 
  \newline   $(\textbf{H4})$ There exists a $L^*>0$ such that $\inf\limits_{L\geq L^*}\lambda_{\mathscr {PL}}(a, L)>0$, where $\lambda_{\mathscr {PL}}(a, L)$ is the principal Lyapunov exponent of the following equation
\begin{equation}
\begin{cases}
u_t=d\int_{-L}^{L}\kappa (y-x) u(t,y ){\rm d} y- d u(t,x)+a (t,x) u (t,x), & x\in(-L,L), \vspace{0.1cm}
\\u(t,-L)=u(t,L)=0.
\end{cases}
\end{equation}

In this article, unless otherwise specified, $u(t, x ; u_0)$ always denotes the solution of $(\ref{eq1-1})$ with $u(0, x; u_0)=u_0(x).$
Sometimes, we write $u(t, x; u_0, a)$ to emphasize the solution dependent on $a(t,x)$.
Assume that $(\mathbf{H1})-(\mathbf{H2}^*)$ hold, for any given $u_0$ satisfying $(\ref{eq1-2})$, according to Theorem~1.1 \cite{cao2019}, the problem $(\ref{eq1-1})$ admits a unique solution 
$
u(t, x; u_0, g, h)\text{ for all }t>0
$
 with
  \begin{equation}
 u(0, x; u_0)=u_0(x) \text { and  }  0\leq u(t, x; u_0)\leq \max\left\{ |u_0|_{L^\infty}, M\right\}.
 \end{equation} 
where $M$ comes from $(\textbf{H2}^*)$. 
Moreover, $g(t; u_0, h_0)$ is strictly decreasing in $t>0$, and $h(t; u_0,h_0)$ is strictly increasing in $t>0.$ 
Thus, there are $g_\infty\in[-\infty, 0)$ and $h_\infty\in(0,\infty]$ such that $\lim\limits_{t\rightarrow\infty}g(t; u_0, h_0)=g_\infty\text{ and } \lim\limits_{t\rightarrow\infty}h(t; u_0, h_0)=h_\infty.$
Simultaneously,  according to Theorem~1.2~\cite{2023-chengalmost1},  we give the spreading-vanishing dichotomy regimes for $(\ref{eq1-1})$.
\begin{theorem}[Spreading-vanishing dichotomy]\label{th1-2}
Assume that $(\mathbf{H1})-(\mathbf{H4})$ hold. For any $h_0>0$ and $u_0$ satisfying $(\ref{eq1-2})$, the either of the followings must hold:\vspace{0.1cm}
\newline$(1)$ Vanishing:  $h_\infty-g_\infty \leq 2L^*$, and $\lim\limits_{t\rightarrow\infty} u(t,x;u_0)=0$ uniformly for $x\in[g_\infty,h_\infty];$
\newline $(2)$ Spreading: $h_\infty-g_\infty=\infty$, and $\lim\limits_{t\rightarrow\infty}\left[u(t,x;u_0)-u^*(t,x)\right]=0$ uniformly for $x$ in any compact subset of $\mathbb R,$ where $u^*(t,x)$ is the unique time almost periodic solution of the following equation
\begin{equation}\label{eq1-3}
u_t=d\int_{\mathbb R} \kappa (y-x) u(t, y){\rm d}y-d u(t,x)+uf(t, x, u), ~x\in \mathbb R.
\end{equation}
\end{theorem}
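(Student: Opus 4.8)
The plan is to split according to the value of $\ell_\infty:=h_\infty-g_\infty\in(0,\infty]$ (which exists by the recalled monotonicity of $h,g$) and to prove three claims: \emph{(i)} if $\ell_\infty<\infty$ then $\|u(t,\cdot)\|_{C([g(t),h(t)])}\to0$; \emph{(ii)} if $\ell_\infty<\infty$ then necessarily $\ell_\infty\le2L^*$; \emph{(iii)} if $\ell_\infty=\infty$ then $h_\infty=-g_\infty=\infty$ and $u(t,x)-u^*(t,x)\to0$ locally uniformly in $x$. Granting these, \emph{(i)}--\emph{(ii)} give alternative $(1)$, \emph{(iii)} gives alternative $(2)$, and \emph{(ii)} rules out the intermediate case $2L^*<\ell_\infty<\infty$. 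For \emph{(i)} I would first note that $\ell_\infty<\infty$ forces both boundaries to be bounded and, since $h'\ge0\ge g'$, gives $h',-g'\in L^1(0,\infty)$; combined with the global bound $0\le u\le C$, the regularity in $(\mathbf{H2}^*)$ and interior estimates for $(\ref{eq1-1})$, this makes $t\mapsto h'(t),g'(t)$ uniformly continuous, hence $h'(t),g'(t)\to0$ (Barbalat). If $\limsup_{t\to\infty}\|u(t,\cdot)\|_\infty>0$, I would take $t_n\to\infty$ and $x_n$ with $u(t_n,x_n)$ bounded away from $0$ and extract from the translates $u(\cdot+t_n,\cdot)$, $f(\cdot+t_n,\cdot,\cdot)$ (compactness plus almost periodicity in $t$) a nonnegative, nontrivial entire solution $u_\infty$ of the nonlocal equation on the fixed interval $(g_\infty,h_\infty)$, with zero boundary values and a hull element of $f$ as reaction; passing $h'(\cdot+t_n),g'(\cdot+t_n)\to0$ to the limit in $(\ref{hh})$--$(\ref{gg})$ and using $\int_{h_\infty}^{\infty}\kappa(x-y)\,dy>0$ near $x=h_\infty$ (symmetrically near $g_\infty$) forces $u_\infty$ to vanish near $g_\infty$ and $h_\infty$; a nonlocal unique-continuation iteration using $\kappa(0)>0$ then gives $u_\infty\equiv0$, a contradiction.

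\emph{(ii)} Here I would argue by contradiction: if $2L^*<\ell_\infty<\infty$, fix $L\in(L^*,\ell_\infty/2)$, $c=(g_\infty+h_\infty)/2$ and $T_0$ with $(c-L,c+L)\subset(g(t),h(t))$ for $t\ge T_0$; by $(\mathbf{H4})$ the principal Lyapunov exponent on the length-$2L$ interval is some $\lambda>0$. Fixing $\ep\in(0,\lambda)$, claim \emph{(i)} yields $T_1\ge T_0$ with $f(t,x,u(t,x))\ge a(t,x)-\ep$ for $t\ge T_1$, so on $(c-L,c+L)$ the solution $u$ is a supersolution of the linear Dirichlet problem with potential $a-\ep$, whose principal Lyapunov exponent is $\lambda-\ep>0$. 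Choosing $\delta>0$ small enough that $u(T_1,\cdot)$ dominates $\delta$ times the associated positive entire solution at $t=T_1$ (automatic near $x=c\pm L$, where the latter vanishes while $u(T_1,\cdot)>0$), the comparison principle forces $\|u(t,\cdot)\|_\infty\to\infty$, contradicting \emph{(i)}. Hence $\ell_\infty\le2L^*$.

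\emph{(iii)} Assume $\ell_\infty=\infty$. A flux-identity argument as in \emph{(i)}, together with the lower bound below applied on a long (possibly one-sided) interval, rules out a one-sided bounded boundary, so $h_\infty=-g_\infty=\infty$. For the \emph{upper bound} I would extend $u(T_0,\cdot)$ by $0$ outside $[g(T_0),h(T_0)]$ and compare $u$ with the solution $\bar u$ of $(\ref{eq1-3})$ on $\mathbb R$ with this datum; by the known global attractivity of the unique time almost periodic solution $u^*$ of $(\ref{eq1-3})$, $\limsup_{t\to\infty}[u(t,x)-u^*(t,x)]\le0$ locally uniformly. For the \emph{lower bound}, given $R>0$ and $\ep>0$ I would fix $\rho>\max\{R,L^*\}$ so large that $u^*_\rho\ge u^*-\ep$ on $\mathbb R\times[-R,R]$, where $u^*_\rho$ is the unique positive time almost periodic solution of the Dirichlet nonlocal KPP problem on $(-\rho,\rho)$ — it exists and attracts positive data because $\lambda_{\mathscr{PL}}(a,\rho)>0$ by $(\mathbf{H4})$, and $u^*_\rho\uparrow u^*$ as $\rho\to\infty$ — and then pick $T_1$ with $[-\rho,\rho]\subset(g(t),h(t))$ for $t\ge T_1$. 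Since $[g(t),h(t)]\supseteq[-\rho,\rho]$ and $u\ge0$, on $[-\rho,\rho]$ the solution $u$ is a supersolution of that Dirichlet problem for $t\ge T_1$; comparing with its solution issued from a small positive datum at $t=T_1$ gives $\liminf_{t\to\infty}[u(t,x)-u^*(t,x)]\ge-\ep$ on $[-R,R]$. The two bounds together give $u(t,x)-u^*(t,x)\to0$ uniformly on $[-R,R]$.

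\emph{Where the difficulty lies.} The routine parts are the Barbalat and supersolution/comparison steps; the genuinely hard part is \emph{(iii)}, and specifically the analytic object behind the ``novel positive time almost periodic function'' of the abstract: establishing the existence, uniqueness and global attractivity of $u^*$ for the full-line nonautonomous equation $(\ref{eq1-3})$ and its approximation $u^*_\rho\uparrow u^*$ by Dirichlet almost periodic solutions, all of which require principal Lyapunov exponents and exponentially separated entire solutions of nonlocal operators with merely almost periodic coefficients, in place of a classical principal eigenpair. A secondary technical hurdle is the nonlocal unique-continuation step in \emph{(i)} and the bootstrap from $L^1$-summability of the boundary speeds to their decay, where the weak smoothing of the nonlocal operator must be handled carefully.
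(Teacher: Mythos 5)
Your proposal is correct in outline and follows the same architecture as the argument the paper relies on (the theorem is quoted from \cite{2023-chengalmost1}; its proof survives in the source only as the commented-out Lemmas on $h_\infty,g_\infty$ plus the dichotomy argument): finite total expansion forces $h'(t),g'(t)\to 0$ and hence extinction via the flux identities; a positive principal Lyapunov exponent on any sufficiently long interval excludes $2L^*<h_\infty-g_\infty<\infty$; and spreading is handled by sandwiching $u$ between the whole-line problem and large finite-interval Dirichlet problems, whose almost periodic states converge to $u^*$. Two of your three sub-arguments are executed differently. For the intermediate case the paper does not linearize: it invokes the strictly positive almost periodic solution of the nonlocal Dirichlet problem on $(g_\infty+\epsilon_1,h_\infty-\epsilon_1)$ (Theorem~1.4 of \cite{2023-onyido2021non2}), which attracts $u$ from below so that $\liminf_{t\to\infty}h'(t)>0$ contradicts $h_\infty<\infty$; you instead use your claim \emph{(i)} to treat $u$ as a supersolution of the linear problem with potential $a-\ep$ and positive Lyapunov exponent, which is logically tidier but needs the exponential-separation/positive-entire-solution input to convert a positive Lyapunov exponent into growth of one specific positive solution. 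For vanishing, the paper argues directly on a subsequence (a nontrivial limit of $u(t_n,\cdot)$ would keep $h'(t_n)$ bounded below), whereas you pass to an entire limit solution and kill it by nonlocal unique continuation; your version treats the possible degeneration of the kernel weight in the flux more carefully, but must confront the lack of spatial smoothing when extracting the limit --- a point the paper elides as well. In the spreading case the paper writes only the one-sided comparison $u\ge u^l\to u_0\to u^*$; your explicit upper bound via the whole-line Cauchy problem is actually needed for the two-sided conclusion and is a genuine gain in completeness. One shared imprecision worth noting: $(\mathbf{H4})$ is stated only for intervals centered at the origin, while excluding the intermediate case requires positivity of the Lyapunov exponent on a non-centered interval of length exceeding $2L^*$, so both arguments implicitly use a translation/monotonicity property of $\lambda_{\mathscr{PL}}$ that should be made explicit.
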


\section{Main results}
In this section,  we devote ourselves to exploring the spreading speed of the leftward and rightward front for $(\ref{eq1-1}). $

 For the homogeneous nonlocal diffusion model with free boundaries, Du, Li and Zhou investigated the spreading speed of the moving boundaries of the nonlocal diffusion model by applying the semi-wave and traveling wave solutions in~\cite{du2021semi}. 
 However,  for the difficulties caused by the time almost periodicity, the semi-wave solution of the time almost periodic nonlocal diffusion equations could be a rough nut to crack.
Thus, the subtle methods developed by them could not be directly used in the problem $(\ref{eq1-1}).$  We had to look for and develop new methods to solve this problem.

Firstly, consider an assumption about the kernel function which is called satisfying the ``thin-tailed" condition if the following condition holds.
\begin{equation}\label{thin}
 (\textbf{H})\qquad\qquad\qquad\int_{-\infty}^{0}\int_{0}^{\infty}
\kappa(x-y){\rm d}y{\rm d}x=\int_{0}^{\infty}x\kappa(x){\rm d}x<\infty.
\end{equation}
In the case of $f(t,x,u)\equiv f(t,u).$ Considering the following free boundary problem
\begin{equation}\label{eq4-1}
	\left\{\begin{array}{ll}
		u_{ t}=d \int_{g(t)}^{h\left(t\right)} 
		\kappa\left(x-y\right) u\left(t, y\right) \mathrm{d} y-d~
		u+u f(t,u), & t>0,~
		g(t) <x<h\left(t\right), \vspace{0.1cm}  \\
		u\left(t, h(t)\right)=0, u(t,g(t))=0, & t > 0, \vspace{0.1cm} \\
		h^{\prime}\left(t\right)= \mu
		\int_{g(t)}^{h\left(t\right)} 
		\int_{h\left(t\right)}^{\infty} \kappa\left(x-y\right) u\left(t, 
		x\right) \mathrm{d} y\mathrm{d} x, 
		& t 
		> 0,  \vspace{0.1cm} \\
		g^{\prime}\left(t\right)= -\mu
		\int_{g(t)}^{h\left(t\right)} 
		\int_{-\infty}^{0} \kappa\left(x-y\right) u\left(t, 
		x\right) \mathrm{d} y\mathrm{d} x, 
		& t 
		> 0,  \vspace{0.1cm} \\
		u\left(0, x\right)=u_{ 0}\left(x\right),~
		h\left(0\right)=h_{0},~ g(0)=-h_0, & x\in[-h_0, h_0],
	\end{array}\right.
\end{equation}
and the fixed boundary problem 
\begin{equation}\label{eq4-3}
u_t=d\int_{\mathbb R}\kappa(x-y) u(t,y){\rm d}y -d ~ u(t,x)+uf(t, u),  ~x\in\mathbb R.
\end{equation}
\iffalse
with the semi-wave problem 
\begin{equation}\label{eq4-4}
\begin{cases}
u_t=d\int_{-\infty}^{h(t)}\kappa(x-y) u(t,y){\rm d}y -d u(t,x)+uf(t, u), ~&  -\infty<x<h(t),
\\ u(t, -\infty)=1,~u(t,h(t))=0, ~&t>0,
\\h^{\prime}(t)=\mu\int_{-\infty}^{h(t)}\int_{h(t)}^{\infty}\kappa(x-y)u(t,x){\rm d}y{\rm d}x.
\end{cases}
\end{equation}
\fi
by Theorem~4.1~\cite{2023-shen1998convergence} and Theorem~1.3~\cite{2023-onyido2021non2}, there is a unique positive  time almost periodic solution $\hat u^*(t,x)$  of $(\ref{eq4-3})$  with 
\begin{equation}\label{eq04-3}
\hat u^*(t,x)\equiv\hat u^*(t),
\end{equation} 
 where $\hat u^*(t)$ is the positive time almost periodic solution of the following  problem
\begin{equation}
\setlength{\abovedisplayskip}{0pt} %%% 0pt 个人觉得稍妥，可自行设置
 \setlength{\belowdisplayskip}{0pt}
u_t=uf(t,u).
\end{equation}
%For any solution $ \mathscr {U}(t,x;h_0, u_0)$ of $(\ref{eq4-1})$, by Theorem~\ref{th1-2}, we have $\lim\limits_{t\rightarrow \infty}( \mathscr {U}(t, x; h_0, u_0)-\hat u^*(t))=0$  uniformly for $x$ in any compact subsets of $\mathbb R$.

Since $\hat u^*(t)$ is almost periodic in $t$, by Theorem 3.1~\cite{2023-fink2006almost}, there is $\hat u^*$ such that
\begin{equation}\label{eq4-5}
\hat u^*=\lim\limits_{T\rightarrow\infty}\frac{1}{T}\int_{0}^{T}\hat u^{*}(t){\rm d}t.
\end{equation} 
Denote
$$c^*:=\mu\int_{-\infty}^{0}\int_{0}^{\infty}\kappa(x-y) \hat u^* {\rm d}y{\rm d}x.
$$
Assume that $\textbf{(H)}$ holds, one can see that $c^*$ is well defined.

Now we mainly give an explicit estimate of the asymptotic spreading speed,
that is,  
\begin{theorem}[Finite spreading]\label{th1-4}
Assume that $(\mathbf{H1})-(\mathbf{H4})$ hold. When the spreading occurs, if $(\mathbf{H})$ holds, the spreading speeds of the leftward front and the rightward front satisfy that  
\begin{equation}\label{eq1-7}
\lim\limits_{t\rightarrow \infty} \dfrac{h(t)}{t}=\lim\limits_{t\rightarrow\infty}\dfrac {-g(t)}{t}=c^*.
\end{equation}
\end{theorem}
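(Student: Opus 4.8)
\emph{Strategy and the upper bound.} The plan is to prove $\lim_{t\to\infty}h(t)/t=c^*$; the left front is handled by running the same argument for $g$. The basic bookkeeping tool is the substitution $z=h(t)-x$, $w=y-h(t)$, which, by the symmetry of $\kappa$ and $(\mathbf{H})$, gives
\begin{equation*}
\int_{-\infty}^{h(t)}\int_{h(t)}^{\infty}\kappa(x-y)\,\mathrm{d}y\,\mathrm{d}x
=\int_{0}^{\infty}\int_{0}^{\infty}\kappa(z+w)\,\mathrm{d}w\,\mathrm{d}z
=\int_{0}^{\infty}s\,\kappa(s)\,\mathrm{d}s=:K<\infty ,
\end{equation*}
so that $c^*=\mu\hat u^*K$. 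For the upper bound I would first show $\limsup_{t\to\infty}(\,\sup_{x}u(t,x;u_0)-\hat u^*(t)\,)\le 0$: writing $U(t):=\sup_x u(t,x;u_0)$ and using $d\int_{g(t)}^{h(t)}\kappa(x-y)u(t,y)\,\mathrm{d}y\le dU(t)$, a standard differential-inequality argument at a spatial maximum point gives $U'(t)\le U(t)f(t,U(t))$ in the sense of upper Dini derivatives, and comparison with the scalar almost periodic KPP ODE $w'=wf(t,w)$ — whose solutions converge to $\hat u^*(t)$, see Theorem~4.1~\cite{2023-shen1998convergence} — yields the claim. Hence for every $\epsilon>0$ there is $T_\epsilon$ with $u(t,x;u_0)\le\hat u^*(t)+\epsilon$ for all $x$ and $t\ge T_\epsilon$, so that $h'(t)\le\mu(\hat u^*(t)+\epsilon)K$ for $t\ge T_\epsilon$. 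Integrating, dividing by $t$, and using the mean value identity \eqref{eq4-5} in the form $\tfrac1t\int_0^t\hat u^*(s)\,\mathrm{d}s\to\hat u^*$, one obtains $\limsup_{t\to\infty}h(t)/t\le\mu K(\hat u^*+\epsilon)$, and letting $\epsilon\downarrow0$ gives $\limsup_{t\to\infty}h(t)/t\le c^*$.

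\emph{The core estimate.} The heart of the proof is to show that behind the advancing right front the solution recovers the equilibrium at a rate independent of $t$: for every $\epsilon>0$ there are $0<\delta<R$ and $T$ such that
\begin{equation*}
u(t,x;u_0)\ge\hat u^*(t)-\epsilon\qquad\text{for all }t\ge T,\ \ x\in[\,h(t)-R,\ h(t)-\delta\,].
\end{equation*}
Since spreading occurs we have $h_\infty=\infty$, $g_\infty=-\infty$, and, by Theorem~\ref{th1-2}(2), $u(t,\cdot;u_0)\to\hat u^*(t)$ locally uniformly; I would use this to plant a lower bound on a large centred interval $[-\ell,\ell]$ at a large time. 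Propagating it out to the front would be done by constructing a time almost periodic \emph{sub-solution} of \eqref{eq4-1} on a moving interval $[\underline g(t),\underline h(t)]$, whose interior value is a small downward perturbation of the positive almost periodic function $\hat u^*(t)$ and whose two boundary layers, of a fixed width, are shaped by a spatial corrector chosen so that the nonlocal defect they create — together with the flux produced by the thin kernel tail — is uniformly small; this is precisely where $(\mathbf{H})$ enters. One then verifies the free-boundary inequalities $\underline h'(t)\le\mu\int_{\underline g(t)}^{\underline h(t)}\int_{\underline h(t)}^{\infty}\kappa(x-y)\underline u\,\mathrm{d}y\,\mathrm{d}x$ and its symmetric counterpart for $\underline g'$, matches the sub-solution below $u$ at the planting time, and invokes a comparison principle for \eqref{eq4-1} (cf.~\cite{cao2019}) to conclude $h(t)\ge\underline h(t)$, $g(t)\le\underline g(t)$; reading off the sub-solution gives the displayed bound. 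A preliminary uniform-in-space lower bound $u(t,x;u_0)\ge m_0>0$ on $[g(t)+\ell_0,h(t)-\ell_0]$ — obtained by comparing below with the positive almost periodic solutions of the fixed-boundary problems on intervals of length $2\ell_0$, which exist for $\ell_0$ large thanks to $(\mathbf{H4})$ — is used to launch the construction.

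\emph{The lower bound and conclusion.} Given $\epsilon>0$, take $\delta,R,T$ as in the core estimate. For $t$ large, $h(t)-g(t)>R$, so $[h(t)-R,h(t)-\delta]\subset(g(t),h(t))$; discarding the remaining nonnegative part of the integrand in the law for $h'$ and applying the core estimate together with the substitution above,
\begin{equation*}
h'(t)\ge\mu(\hat u^*(t)-\epsilon)\int_{\delta}^{R}\int_{0}^{\infty}\kappa(z+w)\,\mathrm{d}w\,\mathrm{d}z\ \ge\ \mu(\hat u^*(t)-\epsilon)(K-\epsilon),
\end{equation*}
the last inequality holding once $\delta$ is small and $R$ is large, since $\int_0^\infty\kappa(z+w)\,\mathrm{d}w\le\tfrac12$ is integrable near $z=0$ and the tail of $\int_{\cdot}^{\infty}\int_0^\infty\kappa(z+w)\,\mathrm{d}w\,\mathrm{d}z$ vanishes. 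Integrating, dividing by $t$, and using \eqref{eq4-5} once more yields $\liminf_{t\to\infty}h(t)/t\ge\mu(\hat u^*-\epsilon)(K-\epsilon)$, and $\epsilon\downarrow0$ gives $\liminf_{t\to\infty}h(t)/t\ge c^*$. Combined with the upper bound this proves $\lim_{t\to\infty}h(t)/t=c^*$, and $\lim_{t\to\infty}(-g(t))/t=c^*$ follows identically.

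\emph{Main obstacle.} Everything apart from the core estimate is comparison-and-averaging bookkeeping. The genuine difficulty is the core estimate — quantifying, uniformly in $t$, how fast $u(t,\cdot;u_0)$ climbs back to $\hat u^*(t)$ immediately behind the moving front, \emph{without} a semi-wave solution (which is not available in the time almost periodic medium). Concretely, the technical heart is the design of the time almost periodic sub-solution: choosing the boundary-layer corrector so that the nonlocal term, the interior KPP inequality, and both free-boundary flux inequalities hold simultaneously for all large $t$, using only $(\mathbf{H})$ and the almost periodicity of $f$.
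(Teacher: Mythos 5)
Your upper bound is correct and is actually a more economical route than the paper's Step~1: instead of building the full super-solution pair $(\overline u,\overline h)$ out of the almost periodic solution $u_\epsilon$ of the whole-line problem with $f+\epsilon$, you bound $\sup_x u(t,\cdot)$ by the scalar almost periodic ODE $w'=wf(t,w)$ and then integrate the flux law directly using $\int_{-\infty}^{h}\int_{h}^{\infty}\kappa(x-y)\,\mathrm{d}y\,\mathrm{d}x=\int_0^\infty s\kappa(s)\,\mathrm{d}s$; together with the mean value \eqref{eq4-5} this gives $\limsup h(t)/t\le c^*$ with less machinery, and the Dini-derivative argument at an interior spatial maximum is legitimate here because the domain is compact and $u$ vanishes at the free boundaries.

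The lower bound, however, has a genuine gap. Everything rests on your ``core estimate'' $u(t,x)\ge\hat u^*(t)-\epsilon$ for $x\in[h(t)-R,\,h(t)-\delta]$ uniformly in large $t$, and this is asserted, not proved: the construction you sketch --- a time almost periodic sub-solution travelling \emph{with} the front, with boundary-layer correctors controlling the nonlocal defect --- is essentially the semi-wave object whose unavailability in the time almost periodic medium is the stated obstruction of the whole paper, so you have relocated the entire difficulty into an unproved lemma. What is actually available from Theorem~\ref{th1-2}(2) is convergence $u(t,\cdot)\to u^*(t,\cdot)$ only locally uniformly in $x$, i.e.\ on \emph{fixed} compact sets; that gives no information in the frame of the moving front, and upgrading it is not ``bookkeeping.'' The paper circumvents this by a different decomposition: in its Step~2 the sub-solution profile is $(1-\epsilon)^2u_{\epsilon^-}$, where $u_{\epsilon^-}$ is the almost periodic solution of the \emph{fixed}-interval problem \eqref{eq4-14} on $(-L,L)$ with $f-\epsilon$ (so the needed lower bound on $u$ is only invoked on a fixed compact set, where Theorem~\ref{th1-2} supplies it), while only the fictitious front $\underline h(t)$, defined by integrating the flux of $u_{\epsilon^-}$, moves linearly; the comparison principle is then applied to the pair $(\underline u,\underline h)$ to get $\underline h\le h$ directly, and $c^*$ is recovered in the limits $L\to\infty$, $\epsilon\to0$. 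Be aware, though, that the delicate point survives in either route: the flux inequality at the moving fictitious boundary $\underline h(t)$ compares $\int_{-\infty}^{0}\int_{0}^{\infty}\kappa(x-y)\underline u\,\mathrm{d}y\,\mathrm{d}x$ with $\int_{-\underline h}^{\underline h}\int_{\underline h}^{\infty}\kappa(x-y)\underline u\,\mathrm{d}y\,\mathrm{d}x$, and these coincide only for a spatially constant profile; any complete argument (yours or the paper's) must justify this step honestly, which is exactly where your boundary-layer corrector would have to do its work. As written, your proof of the $\liminf$ is therefore incomplete.
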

\begin{theorem}\label{th02-2}
Under the conditions of Theorem~\ref{th1-4}, 
for any $\varepsilon\in(0, c^*)$, it follows
\begin{equation}\label{eq-s1}
\lim _{t \rightarrow \infty} \max _{|x| \leq\left(c^*-\varepsilon\right) t}\left|u(t, x)-\hat u^*(t)\right|=0.
\end{equation}
%%%%%%%%%%%%%%%%%%%%%%%
\end{theorem}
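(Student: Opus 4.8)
The plan is to trap $u(t,x)$ between a spatially flat upper barrier and a ``transported'' lower barrier, both of which converge to $\hat u^*(t)$, and to show that the trapping is uniform over the linearly growing region $|x|\le(c^*-\varepsilon)t$.

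\emph{Upper barrier.} Let $\bar v(t)$ solve the logistic ODE $\bar v'=\bar v f(t,\bar v)$ with $\bar v(0)=\max\{|u_0|_{L^\infty},M\}$, $M$ from $(\mathbf{H2}^*)$. Since $\int_{g(t)}^{h(t)}\kappa(x-y)\,{\rm d}y\le\int_{\mathbb R}\kappa=1$, the constant-in-$x$ function $\bar v(t)$ is a supersolution of $(\ref{eq4-1})$, so by the comparison principle (cf.~\cite{cao2019}) $u(t,x)\le\bar v(t)$ for $t\ge0$ and $x\in[g(t),h(t)]$. By the dynamics of the almost periodic logistic ODE (Theorem~4.1~\cite{2023-shen1998convergence} together with $(\ref{eq04-3})$), $\bar v(t)-\hat u^*(t)\to0$ as $t\to\infty$, whence $\limsup_{t\to\infty}\max_{g(t)\le x\le h(t)}\big(u(t,x)-\hat u^*(t)\big)\le0$. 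This already controls the ``$+$'' side of $(\ref{eq-s1})$ on the whole occupied interval.

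\emph{Lower barrier.} Fix a small $\eta\in(0,\inf_t\hat u^*(t))$. The key ingredient is a persistence estimate on linearly growing domains: there are $\delta_0=\delta_0(\varepsilon)>0$ and $T_0>0$ such that $u(t,x)\ge\delta_0$ whenever $t\ge T_0$ and $|x|\le(c^*-\tfrac{\varepsilon}{2})t$; we also enlarge $T_0$ so that $h(t)>(c^*-\tfrac{\varepsilon}{4})t$ and $g(t)<-(c^*-\tfrac{\varepsilon}{4})t$ for $t\ge T_0$, which is possible by Theorem~\ref{th1-4}. Next, using $(\mathbf{H4})$ and letting $L\to\infty$ in the fixed-boundary problem on $(-L,L)$ — whose positive time almost periodic solution $v_L$ exists by Theorem~1.4~\cite{2023-onyido2021non2}, is nondecreasing in $L$ by comparison, and tends to the entire solution $\hat u^*(t)$ of $(\ref{eq4-3})$ by the uniqueness for $(\ref{eq4-3})$ and $(\ref{eq04-3})$ — fix $L$ so large that $v_L(t,0)\ge\hat u^*(t)-\eta$ for all $t$. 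Since $f$ does not depend on $x$, the positive time almost periodic solution of the fixed-boundary problem on any interval $(x_0-L,x_0+L)$ is $v_L(t,\cdot-x_0)$. Now take $t_0$ large and $x_0\in[0,(c^*-\varepsilon)t_0]$ (the case $x_0<0$ is symmetric, using $g$ in place of $h$). Put $s_0:=\max\{T_0,(x_0+L)/(c^*-\tfrac{\varepsilon}{2})\}$; then $[x_0-L,x_0+L]\subset\{|x|\le(c^*-\tfrac{\varepsilon}{2})s_0\}$ and, by monotonicity of $g,h$, $[x_0-L,x_0+L]\subset(g(s_0),h(s_0))\subset(g(t),h(t))$ for all $t\ge s_0$. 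Hence $u(s_0,\cdot)\ge\delta_0$ on $[x_0-L,x_0+L]$, and on that interval $u$ is a supersolution of the fixed-boundary problem on $(x_0-L,x_0+L)$ (enlarging the nonlocal integration domain only increases the right-hand side, and $u\ge0$ at $x_0\pm L$). Comparing $u$ with the solution $w$ of that problem started at time $s_0$ from $\delta_0$ times a fixed cutoff vanishing at $x_0\pm L$, and using the (hull-)uniform attractivity of the positive time almost periodic solution (Theorem~1.3~\cite{2023-onyido2021non2}), we obtain a time lag $\tau_\eta$ \emph{independent of $x_0$ and of $s_0$} with $w(t,x_0)\ge v_L(t,0)-\eta$ for $t\ge s_0+\tau_\eta$. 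Because $(c^*-\varepsilon)/(c^*-\tfrac{\varepsilon}{2})<1$, for all $t_0$ large enough one has $s_0\le t_0-\tau_\eta$ for every such $x_0$, and therefore $u(t_0,x_0)\ge w(t_0,x_0)\ge v_L(t_0,0)-\eta\ge\hat u^*(t_0)-2\eta$. Combining this with the upper barrier gives $\limsup_{t\to\infty}\max_{|x|\le(c^*-\varepsilon)t}|u(t,x)-\hat u^*(t)|\le2\eta$, and letting $\eta\to0$ yields $(\ref{eq-s1})$.

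\emph{Main obstacle.} Everything reduces to the persistence estimate on the domains $|x|\le(c^*-\tfrac{\varepsilon}{2})t$: the convergence on compact sets provided by Theorem~\ref{th1-2} is insufficient because the ``good'' region grows linearly, so a genuine transport mechanism is needed. In the homogeneous model this role is played by monotone semi-waves, which are not available in the time almost periodic setting; the substitute must be extracted from the almost periodic moving subsolution used to prove the lower spreading bound $\liminf_{t\to\infty}h(t)/t\ge c^*$ in Theorem~\ref{th1-4} (together with its mirror image for $g$). The remaining point requiring care is that the relaxation time $\tau_\eta$ in the auxiliary fixed-boundary problems be uniform in the base point $x_0$ and the starting time $s_0$; this holds because, for fixed $L$, the initial data are bounded below by $\delta_0$ and above by $\bar v$, the problem is translation invariant in $x$, and the attractivity of the positive time almost periodic solution is uniform over the time-hull of $f$.
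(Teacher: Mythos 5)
Your upper barrier is correct and in fact more direct than the paper's (the paper reaches the same one-sided bound via the whole-line almost periodic solution $u_\sigma$ of $(\ref{eq4-11})$, which coincides with the ODE solution $\overline u_\sigma(t)$). The lower bound, however, contains a genuine gap at exactly the point you yourself label the ``main obstacle'': the persistence estimate $u(t,x)\ge\delta_0$ for $t\ge T_0$ and $|x|\le(c^*-\tfrac{\varepsilon}{2})t$ is asserted, not proved, and it does not follow from the ingredients you cite. The moving subsolution in Step~2 of the proof of Theorem~\ref{th1-4} is the pair $(\underline u,\underline h)$ with $\underline u(t,x)=(1-\epsilon)^2u_{\epsilon^-}(t,x)$, where $u_{\epsilon^-}$ solves the fixed-boundary problem $(\ref{eq4-14})$ on a \emph{fixed} interval $(-L,L)$ and vanishes for $|x|\ge L$; the linear growth is carried entirely by the front $\underline h(t)$, while the density component of the subsolution is positive only on the fixed compact set $(-L,L)$. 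Consequently the comparison $(\ref{l2})$ yields a positive lower bound for $u$ only on a compact set, not on a linearly growing one, and ``extracting'' your $\delta_0$-estimate from it requires re-centering and iterating the subsolution construction at moving base points (or building a genuinely travelling subsolution) --- precisely the nontrivial transport step that is missing. Since everything downstream (the local comparison on $(x_0-L,x_0+L)$ and the bootstrap to $\hat u^*(t)-2\eta$) is conditional on this estimate, the proof is incomplete at its central point. A secondary, smaller issue is the uniformity in $x_0$ and $s_0$ of the relaxation time $\tau_\eta$: translation invariance handles $x_0$, but uniformity in the starting time requires attractivity that is uniform over the hull of $f$, which should be stated and sourced rather than asserted.

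For comparison, the paper does not introduce a separate persistence lemma at all: it reads both bounds directly off the barriers of Theorem~\ref{th1-4}, namely $u(t,x)\le(1-\sigma)^{-2}u_\sigma(t,x)\le(1-\sigma)^{-2}\overline u_\sigma(t)$ from Step~1 and $u(t,x)\ge(1-\sigma)^2u_{\sigma^-}(t,x)\ge(1-\sigma)^2(\underline u_\sigma(t)-\varepsilon)$ on $|x|\le(c^*-\varepsilon)t$ from Step~2, then lets $\sigma\to0$ using $(\ref{l0})$. The spatial extent of that lower bound is exactly the delicate point you identified, so the re-centering argument you sketch is the right idea for making the lower bound rigorous on the linearly growing region --- but as written your proposal only names the needed lemma without supplying it.
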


\begin{theorem}[Accelerated spreading]\label{th1-04}
Assume that $(\mathbf{H1})-(\mathbf{H4})$ hold. When the spreading occurs, if $(\mathbf{H})$ does not hold, it follows that
\begin{equation}\label{eq1-8}
\lim\limits_{t\rightarrow \infty} \dfrac{h(t)}{t}=\lim\limits_{t\rightarrow\infty}\dfrac {-g(t)}{t}=\infty.
\end{equation}
\end{theorem}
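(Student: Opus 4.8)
The plan is to read off the super-linear growth of $h$ and $-g$ directly from the free-boundary relations, once one knows that, in the spreading regime, $u(t,\cdot)$ stays bounded below by a fixed positive constant on the interior of $(g(t),h(t))$, away from the two fronts by a \emph{fixed} distance. Write $K(s):=\int_s^{\infty}\kappa(z)\,{\rm d}z$ and $\ell(t):=h(t)-g(t)$. Since $\kappa$ is even, a change of variables turns the boundary conditions in $(\ref{eq4-1})$ into
\[
h'(t)=\mu\int_{g(t)}^{h(t)}u(t,x)\,K\!\big(h(t)-x\big)\,{\rm d}x,\qquad -g'(t)=\mu\int_{g(t)}^{h(t)}u(t,x)\,K\!\big(x-g(t)\big)\,{\rm d}x,
\]
and the identity recorded in $(\mathbf{H})$ reads $\int_0^{\infty}K(s)\,{\rm d}s=\int_0^{\infty}x\kappa(x)\,{\rm d}x$; hence the failure of $(\mathbf{H})$ means exactly $\int_0^{\infty}K(s)\,{\rm d}s=\infty$. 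Since spreading occurs, Theorem~\ref{th1-2} gives $h_\infty-g_\infty=\infty$, so $\ell(t)\to\infty$, and (as in the analysis underlying Theorem~\ref{th1-2}) both $h_\infty=\infty$ and $g_\infty=-\infty$.

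The heart of the argument is the interior estimate I would isolate as a lemma: there exist $\delta_0>0$, $C_0>0$ and $T_0>0$ such that $u(t,x)\ge\delta_0$ for all $t\ge T_0$ and all $x\in[\,g(t)+C_0,\ h(t)-C_0\,]$, in the spirit of the corresponding result for the homogeneous model in~\cite{cao2019}. To prove it I would combine the local convergence $u(t,x)\to\hat u^*(t)$ on compacta (Theorem~\ref{th1-2}), with $\inf_{t}\hat u^*(t)>0$, with a sliding comparison: for each integer $n$, once the translated interval $[n-L^*,n+L^*]$ (with $L^*$ from $(\mathbf{H4})$, so $\lambda_{\mathscr{PL}}(a,L^*)>0$) has been contained in $(g(t),h(t))$, compare $u$ from below with the solution $v_n$ of the fixed-boundary problem on $[n-L^*,n+L^*]$ started there with the corresponding (positive) restriction of $u$; by the hair-trigger property $v_n(t,\cdot)$ converges as $t\to\infty$ to the positive time-almost-periodic solution on that interval, which — since $a=f(\cdot,0)$ is independent of $x$ in $(\ref{eq4-1})$, so this solution is a translate of the one on $[-L^*,L^*]$ — is bounded below by a fixed positive constant on its central part $[n-L^*+C_0,n+L^*-C_0]$. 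Taking the union over the $n$ admissible at time $t$ delivers the asserted bound on $[g(t)+C_0,h(t)-C_0]$.

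Granting the lemma, the conclusion is immediate. For $t\ge T_0$ with $\ell(t)>2C_0$, discarding in the first integral above all $x\notin[g(t)+C_0,h(t)-C_0]$ and substituting $\sigma=h(t)-x$ gives $h'(t)\ge\mu\delta_0\int_{C_0}^{\ell(t)-C_0}K(\sigma)\,{\rm d}\sigma$, and symmetrically $-g'(t)\ge\mu\delta_0\int_{C_0}^{\ell(t)-C_0}K(\sigma)\,{\rm d}\sigma$. Because $(\mathbf{H})$ fails, $\int_{C_0}^{\infty}K(\sigma)\,{\rm d}\sigma=\infty$, while $\ell(t)\to\infty$, so the right-hand side tends to $+\infty$; hence $h'(t)\to\infty$ and $-g'(t)\to\infty$. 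Therefore, for every $M>0$ there is $T_M$ with $h'(t)\ge M$ on $[T_M,\infty)$, so $\liminf_{t\to\infty}h(t)/t\ge M$; letting $M\to\infty$ gives $\lim_{t\to\infty}h(t)/t=\infty$, and likewise $\lim_{t\to\infty}(-g(t))/t=\infty$, which is $(\ref{eq1-8})$.

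The main obstacle is the interior lemma, and within it the requirement that the strip of width $C_0$ removed at each front be bounded \emph{uniformly in $t$}: if one could only secure $u\ge\delta_0$ on $[g(t)+C_0(t),h(t)-C_0(t)]$ with $C_0(t)\to\infty$, then $\int_{C_0(t)}^{\infty}K\to0$ and the estimate collapses. This uniformity is delicate because it is entangled with the conclusion itself — a faster front leaves the freshly invaded interior less time to rise to $\hat u^*$ — so the sliding comparison must be run with care about how the equilibration times of the $v_n$ depend on $n$; here one would lean on the fact that the core $[-R,R]$ is already uniformly positive for all large $t$ and feeds every $v_n$ through the nonlocal term, whose kernel is strictly positive near $0$. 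Once this is in place, the rest is routine.
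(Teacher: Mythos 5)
Your reduction of the boundary conditions to $h'(t)=\mu\int_{g(t)}^{h(t)}u(t,x)K(h(t)-x)\,{\rm d}x$ with $K(s)=\int_s^\infty\kappa$, and your observation that the failure of $(\textbf{H})$ is exactly $\int_0^\infty K=\infty$, are both correct. But the interior lemma on which everything rests --- $u(t,x)\ge\delta_0$ for $t\ge T_0$ on $[g(t)+C_0,\,h(t)-C_0]$ with $C_0$ \emph{fixed} --- is not merely delicate in the accelerated regime: it is false there, and in fact it is refuted by the very conclusion you draw from it. Indeed, granting the lemma, your estimate gives $h'(t)\ge\mu\delta_0\int_{C_0}^{\ell(t)-C_0}K\to\infty$, so for any $M$ there is $T_M$ with $h'\ge M$ on $[T_M,\infty)$. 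Now fix $t$ large and let $s_0$ be the time with $h(s_0)=h(t)-C_0$; then $C_0=\int_{s_0}^{t}h'\ge M(t-s_0)$, so $t-s_0\le C_0/M$. At time $s_0$ the point $x_t:=h(t)-C_0$ sits on the boundary, so $u(s_0,x_t)=0$, and since $u$ is uniformly bounded the equation gives $\partial_t u(\cdot,x_t)\le C_1$ for a constant $C_1$ depending only on $d$, $\|u\|_\infty$ and $\sup|f|$; hence $u(t,x_t)\le C_1C_0/M<\delta_0$ once $M$ is large. The escape route you sketch (the nonlocal term fed by the uniformly positive core through $\kappa>0$ near $0$) cannot overcome this: the growth rate of $u$ is bounded no matter how it is fed, so a point a fixed distance behind an unboundedly fast front has had too little time in the domain to reach any fixed level. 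A version of the lemma with $C_0(t)\to\infty$ is what one can actually hope for, but then $\int_{C_0(t)}^{\ell(t)-C_0(t)}K$ need not diverge and the estimate collapses, as you yourself note.

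The paper sidesteps this circularity entirely by truncating the kernel: it chooses compactly supported $\kappa_n\le\kappa$ with $\kappa_n\to\kappa$ in $L^1$, so that each $\kappa_n$ satisfies $(\textbf{H})$; the finite-speed analysis (Step 2 of the proof of Theorem~\ref{th1-4}) then applies to the auxiliary problem with kernel $\kappa_n$ and yields a finite speed $\hat c_n^*=\mu\int_{-\infty}^{0}\int_{0}^{\infty}\kappa_n(x-y)\hat u_n^*\,{\rm d}y\,{\rm d}x$, a comparison argument gives $\liminf_{t\to\infty}h(t)/t\ge\hat c_n^*$ for the original problem, and $\hat c_n^*\to\infty$ precisely because $(\textbf{H})$ fails for $\kappa$. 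If you want to complete your argument, this truncation is the missing idea: it converts the unbounded-speed situation into a family of bounded-speed ones where interior lower bounds of the type you want are legitimate.
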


\section{Proofs}
\begin{proof}[The proof of Theorem~\ref{th1-4}]
 For simplify, denote $u(t,x)=u(t,x;u_0)$. As $\hat{u}(t, x):=u(t,-x)$ satisfies $(\ref{eq4-1})$ with free boundaries 
 $$\setlength{\abovedisplayskip}{0pt} %%% 0pt 个人觉得稍妥，可自行设置
 \setlength{\belowdisplayskip}{0pt}
 x=\hat{h}(t):=-g(t),~ x=\hat{g}(t):=-h(t)$$ 
 and initial function $\hat{u}_0(x):=u_0(-x)$, we only need to prove the case of $h(t)$. The spreading speed for $g(t)$ can be directly obtained.

Now we complete the proof of this theorem in two steps.

${Step~1:}$ Firstly, we prove that $\setlength{\abovedisplayskip}{0pt} %%% 0pt 个人觉得稍妥，可自行设置
 \setlength{\belowdisplayskip}{0pt}
 \limsup\limits_{t\rightarrow\infty}\dfrac{h(t)}{t}\leq c^*.$

For $0<\epsilon\ll 1$, let $\overline u_\epsilon(t)$ and $\underline u_\epsilon(t)$ be the positive time almost periodic solutions of the following problem
\begin{equation}\label{eq4-6}
u_t=u(f(t,u)+\epsilon)
\end{equation}
and 
\begin{equation}\label{eq4-7}
u_t=u(f(t,u)-\epsilon),
\end{equation}
respectively. Thus,
\begin{equation}\label{eq4-8}
\lim\limits_{\epsilon \rightarrow 0} \underline u_\epsilon(t)=\lim\limits_{\epsilon\rightarrow 0}\overline u_\epsilon(t)=\hat u^*(t)
\end{equation}
uniformly in $t\in\mathbb R.$
 Applying a comparison argument, we can see that 
 \begin{equation}\label{l0}
 \underline  u_\epsilon(t) \leq \hat u^*(t) \leq  \overline u_\epsilon(t).
 \end{equation}
 Thus, according to Theorem~\ref{th1-2}, there exists $T>0$ such that $u(t+T, x)\leq \overline u_\epsilon(t+T), \text { for } t\geq 0, ~x\in[-h(t+T),h(t+T)].$
Let $$\tilde u(t,x)=u(t+T,x), ~\tilde h(t)=h(t+T),~\tilde g(t)=g(t+T),$$ then $\tilde u(t,x)$ satisfies
\begin{equation}
	\left\{\begin{array}{ll}
		\tilde u_{ t}=d \int_{\tilde g(t)}^{\tilde h\left(t\right)} 
		\kappa\left(x-y\right) \tilde u\left(t, y\right) \mathrm{d} y-d 
		\tilde u+\tilde u f(t+T,  \tilde u), & t>0,~
		\tilde g\left(t \right)<x<\tilde h\left(t\right),  \vspace{0.1cm} \\
		 \tilde u(0,x)=u(T,x), & t > 0,~ \tilde g(0)<x< \tilde h(0),  \vspace{0.1cm} \\
		\tilde h^{\prime}\left(t\right)= \mu
		\int_{\tilde g(t)}^{\tilde h\left(t\right)} 
		\int_{\tilde h\left(t\right)}^{\infty} \kappa\left(x-y\right) \tilde u\left(t, 
		x\right) \mathrm{d} y\mathrm{d} x, 
		& t 
		> 0,  \vspace{0.1cm} \\
		\tilde g^{\prime}\left(t\right)=- \mu
		\int_{\tilde g(t)}^{\tilde h\left(t\right)} 
		\int_{-\infty}^{\tilde g(t)} \kappa\left(x-y\right) \tilde u\left(t, 
		x\right) \mathrm{d} y\mathrm{d} x, 
		& t 
		> 0,  \vspace{0.1cm} \\
		\tilde u(t, \tilde h(t))=0, ~\tilde u(t,\tilde g(t))=0, & t>0.
	\end{array}\right.
\end{equation}
Let $u^*(t)$ be the solution to the following initial problem
\begin{equation}
\begin{cases}
u_t=u(f(t,u)+\epsilon)), &t>0,\vspace{0.1cm} \\
u(T)=\max\{\overline u_\epsilon(T), \|\tilde u(0,\cdot)\|_{\infty} \},
\end{cases}
\end{equation}
then $u^*(t)\geq \overline u_\epsilon (t), \text {for } t\geq T,$ and $$\lim\limits_{t\rightarrow\infty}(u^*(t)-\overline u_\epsilon(t))=0. $$
Moreover, by Theorem~\ref{th1-2}, combining $(\ref{eq04-3})$ with $(\ref{eq4-8})$, we have  $$\setlength{\abovedisplayskip}{0pt} %%% 0pt 个人觉得稍妥，可自行设置
 \setlength{\belowdisplayskip}{0pt} \tilde u(t,x)\leq u^*(t+T), \text { for } t\geq 0, -\tilde h(t)<x<\tilde h(t),$$
then $$\setlength{\abovedisplayskip}{0pt} %%% 0pt 个人觉得稍妥，可自行设置
 \setlength{\belowdisplayskip}{0pt} \tilde u(t,x)\leq \dfrac{\overline u_{\epsilon}(t+T)}{1-\epsilon}, \text { for } t\gg 1, -\tilde h(t)<x<\tilde h(t).$$
Let $u_\epsilon(t,x)$ be the unique positive almost periodic solution of the following problem
\begin{equation}\label{eq4-11}
u_t=d\int_{\mathbb R}\kappa(x-y) u(t,y){\rm d}y -d  u(t,x)+u(f(t, u)+\epsilon), ~x\in\mathbb R,
\end{equation}
 by  the uniqueness of almost periodic solution for $(\ref{eq4-11})$ explained in Theorem~1.4~\cite{2023-onyido2021non2}, we have 
$$\setlength{\abovedisplayskip}{0pt} %%% 0pt 个人觉得稍妥，可自行设置
 \setlength{\belowdisplayskip}{0pt}
u_\epsilon(t,x) \equiv  u_\epsilon(t)= \overline u_\epsilon(t).$$
Thus, there is $T^*>T\gg1$ such that $$u_\epsilon(t,x)\geq (1-\epsilon)\overline u_\epsilon(t), \text{ for }t>T^*, -\tilde h(t)<x<\tilde h(t) .$$
Therefore, $$\setlength{\abovedisplayskip}{0pt} %%% 0pt 个人觉得稍妥，可自行设置
 \setlength{\belowdisplayskip}{0pt} \tilde u(t,x)\leq \dfrac{\overline u_\epsilon(t+T)}{1-\epsilon}\leq \dfrac{u_\epsilon(t+T, x)}{(1-\epsilon)^2}, \text { for }t>T^*, -\tilde h(t)<x<\tilde h(t).$$
For fixed $L_0>0$, take
\begin{equation}
\begin{aligned}
&\overline  h(t)=(1-\epsilon)^{-2}\mu\int_{0}^{t} \int_{-\infty}^{0}\int_{0}^{\infty} \kappa(x-y) u_\epsilon(s, x){\rm d}y{\rm d}x{\rm d}s
+\tilde h(T^*)+L_0, \text { for }t\geq0,
\\& \overline u(t,x)={(1-\epsilon)^{-2}}{u_\epsilon(t,x) }, \text { for } t\geq0,~x\in\mathbb R.
\end{aligned}
\end{equation}
According to assumption $(\textbf{H2}^*)$  for $f(t,u)$, direct computations give
\begin{equation}
\begin{aligned}
\overline u_t&=(1-\epsilon)^{-2} u_{\epsilon, t}
=(1-\epsilon)^{-2} \left[d\int_{\mathbb R}\kappa (x-y)u_{\epsilon}(t,y){\rm d}y-d ~u_\epsilon (t,x)  +u_\epsilon f(t,u_\epsilon) \right]
\\&\geq  d\int_{-\overline h(t)}^{\overline h(t)}\kappa (x-y) \overline u(t,y){\rm d}y-d ~\overline u (t,x)  +\overline u  f(t, u_\epsilon) 
\\&\geq d\int_{-\overline h(t) }^{\overline h(t)}\kappa (x-y) \overline u(t,y){\rm d}y-d ~\overline u (t,x)  +\overline u  f(t, \overline u).
\end{aligned}
\end{equation}
Meanwhile,
\begin{equation}
\begin{aligned}
\\\overline h^{\prime}(t)&= \mu\int_{-\infty}^{0}\int_{0}^{\infty} \kappa(x-y) \overline u(t,x) {\rm d}y{\rm d}x
\\   & = \mu\int_{-\infty}^{\overline h(t)}\int_{\overline h(t)}^{\infty} \kappa(x-y) \overline u(t,x) {\rm d}y{\rm d}x
\\&\geq  \mu\int_{-\overline h(t) }^{\overline h(t)}\int_{\overline h(t)}^{\infty} \kappa(x-y) \overline u(t,x) {\rm d}y{\rm d}x.
\end{aligned}
\end{equation}
Moreover, $$\overline u(T,x)={(1-\epsilon)^{-2}}{u_\epsilon(T, x) }\geq \tilde u(0, x), \text{ for } x\in (-\tilde h(T), \tilde h(T))$$
with  $\overline h(T^*)\geq\tilde h(T^*)$ and
$\overline u(t, \overline h(t))\geq 0, \text { for } t\geq 0.$
Applying the Comparison principle, one can see that 
\begin{equation}\label{l1}
\overline u(t+T, x)\geq \tilde u(t,x),~ \overline h(t)\geq \tilde h(t), \text { for }t\geq T^*,  -\tilde h(t)<x< \tilde h(t),
\end{equation}
which implies that 
$$  \begin{aligned}
\limsup\limits_{t\rightarrow\infty}\dfrac{h(t)}{t}&=\limsup\limits_{t\rightarrow \infty}\dfrac{\tilde h(t-T)}{t}
\leq \limsup\limits_{t\rightarrow \infty} \dfrac{\overline h(t-T)}{t}
\\&= \lim\limits_{t\rightarrow\infty} \dfrac{(1-\epsilon)^{-2}\mu\int_{0}^{t-T} \int_{-\infty}^{0}\int_{0}^{\infty} \kappa(x-y) u_\epsilon(s, x){\rm d}y{\rm d}x{\rm d}s
+\tilde h(T^*)+L_0}{t} 
\\&= \lim\limits_{t\rightarrow\infty} \dfrac{(1-\epsilon)^{-2}\mu\int_{0}^{t-T} \int_{-\infty}^{0}\int_{0}^{\infty} \kappa(x-y) u_\epsilon(s, x){\rm d}y{\rm d}x{\rm d}s
}{t}.
\end{aligned}$$
Since $u_\epsilon(t,x)\rightarrow \hat u^*(t,x)$ as $\epsilon\rightarrow 0$ uniformly in $t\in \mathbb R$ and $x$ in any compact subsets of $\mathbb R$,  given ${\rm (\mathbf{H})}$ and $(\ref{eq4-5})$, it follows that 
\begin{equation}\label{eq4-12}
\begin{aligned}
\limsup\limits_{t\rightarrow\infty}\dfrac{h(t)}{t}&\leq \limsup\limits_{t\rightarrow \infty} \dfrac{\overline h(t-T)}{t}
= \lim\limits_{t\rightarrow\infty} \dfrac{\mu\int_{0}^{t-T} \int_{-\infty}^{0}\int_{ 0}^{\infty} \kappa(x-y) \hat u^*(s, x){\rm d}y{\rm d}x{\rm d}s
}{t}
\\&=\lim\limits_{t\rightarrow\infty} \dfrac{\mu\int_{0}^{t-T} \int_{-\infty}^{0}\int_{ 0}^{\infty} \kappa(x-y) \hat u^*(s){\rm d}y{\rm d}x{\rm d}s
}{t}
\\&=\lim\limits_{t\rightarrow\infty} \dfrac{\mu\int_{0}^{t} \int_{-\infty}^{0}\int_{ 0 }^{\infty} \kappa(x-y) \hat u^*(s){\rm d}y{\rm d}x{\rm d}s
}{t}
\\&=\mu \int_{-\infty}^{0}\int_{ 0 }^{\infty} \kappa(x-y) \hat u^* { \rm d}y{\rm d}x
=c^*.
\end{aligned}
\end{equation} 

${Step~2:}$ We will prove $\liminf\limits_{t\rightarrow \infty}\dfrac {h(t)}{t} \geq c^*.$

Note that there exists a unique positive almost periodic solution $u_*(t, x)\equiv u_*(t) $ of the problem
$$
u_t=d \int_{\mathbb R} 
		\kappa\left(x-y\right)  u\left(t, y\right) \mathrm{d} y-d~
		 u+ u f(t+T, u),~x\in\mathbb R.
$$
By Theorem~$\ref{th1-2}$, it follows that
$$
\lim _{t \rightarrow \infty}\left[\tilde{u}(t, x)-u_*(t, x)\right]=0
$$
locally uniformly in $x\in \mathbb R$. In view of $(\ref{eq4-7})$, there is $T_*\gg1$ such that
$$
u_*(t,x) \geq \underline{u}_\epsilon(t+T), \text { for } t\geq T_*,
$$
locally uniformly for $x\in \mathbb R$, which implies that $$ \setlength{\abovedisplayskip}{0pt} %%% 0pt 个人觉得稍妥，可自行设置
 \setlength{\belowdisplayskip}{0pt}
\liminf\limits_{t \rightarrow \infty}\left[u_*(t,x)-\underline{u}_\epsilon(t+T)\right] \geq 0$$
locally uniformly for $x\in \mathbb R$.
Thus, $$
\liminf\limits_{t \rightarrow \infty}\left[\tilde u(t,x)-\underline{u}_\epsilon(t+T)\right] \geq 0$$
locally uniformly for $x\in \mathbb R$.

For fixed $L\gg h(T)$, let $u_{\epsilon^-}(t,x) $ be the unique time almost periodic solution of the following problem
\begin{equation}\label{eq4-14}
\begin{cases}
u_t=d\int_{-L}^{L}\kappa(x-y) u(t,y){\rm d}y -d~u(t,x)+u(f(t, u)-\epsilon),  &t>0, ~x\in(-L,L), \vspace{0.1cm}
\\ u(t,x)=0,&t>0,~|x|\geq L.
\end{cases}
\end{equation}
Applying a comparison argument,  
$u_{\epsilon^-}(t,x) \leq  \underline u_{\epsilon}(t),$
 uniformly in $t>0$ and  $x$ in any compact subsets of $\mathbb R.$
 And $u_{\epsilon^-}(t,x) \rightarrow \underline u_{\epsilon}(t) \text { as } L\rightarrow \infty$ uniformly in $t>0$  and $x$ in any compact subsets of $\mathbb R.$

Take $\tilde{T}> T$  such that $h(\tilde T)>L$. 
Define
$$
\begin{aligned}
&\underline h(t)=(1-\epsilon)^2 (1-2\epsilon)\mu \int_{\tilde{T}+T}^t  \int_{-\infty}^{0}\int_{0}^{\infty} \kappa (x-y) u_{\epsilon^-}(s, x) {\rm d} y{\rm d}x {\rm d}s + \tilde{h}(\tilde{T}),
~~\underline u(t, x)=(1-\epsilon)^2 u_{\epsilon^-} (t, x),
\end{aligned}
$$
for $t \geq \tilde{T}+T, ~x\in\mathbb R.
$ Then
$$
\tilde{u}(t, 0) \geq \underline u(t+T, 0), \text { for } t \geq \tilde{T}
\text {and }
\tilde{u}(\tilde{T}, x) \geq \underline u(\tilde{T}+T, x), \text { for }-\underline h(\tilde T+T) \leq x \leq \underline h(\tilde{T}+T).
$$ 
Thus,  we have
$$
\begin{aligned}
\underline h^{\prime}(t)&= (1-2\epsilon)\mu \int_{-\infty}^{0}\int_{0}^{\infty} \kappa (x-y) \underline u(t, x) {\rm d} y{\rm d}x
\\&=(1-2\epsilon)  \mu \int_{-\underline h(t)}^{\underline h(t) }\int_{\underline h(t) }^{\infty} \kappa (x-y) \underline u(t, x) {\rm d} y{\rm d}x
+\mu\int_{-\infty}^{-\underline h(t)}\int_{\underline h(t)}^{\infty} \kappa (x-y) \underline u(t, x) {\rm d} y{\rm d}x
\\&=\mu \int_{-\underline h(t)}^{\underline h(t) }\int_{\underline h(t) }^{\infty} \kappa (x-y) \underline u(t, x) {\rm d} y{\rm d}x+
\mu\int_{-\infty}^{-\underline h(t)}\int_{\underline h(t)}^{\infty} \kappa (x-y) \underline u(t, x) {\rm d} y{\rm d}x
\\&-2\epsilon \mu \int_{-\underline h(t)}^{\underline h(t) }\int_{\underline h(t) }^{\infty} \kappa (x-y) \underline u(t, x) {\rm d} y{\rm d}x
\\&:=\mu \int_{-\underline h(t)}^{\underline h(t) }\int_{\underline h(t) }^{\infty} \kappa (x-y) \underline u(t, x) {\rm d} y{\rm d}x+\mathscr{I}(\epsilon,\mu,h(t)) \text { for }  t \geq \tilde{T}+T.
\end{aligned}
$$

Now we only need to prove $\mathscr{I} (\epsilon,\mu,h(t))\leq 0.$
In fact, given $(\textbf{H})$, one can see that if  $t\gg\tilde T+T$, it follows that
$$
\mu\int_{-\infty}^{-\underline h(t)}\int_{\underline h(t)}^{\infty} \kappa (x-y) \underline u(t, x) {\rm d} y{\rm d}x
\leq 2\epsilon \mu \int_{-\underline h(t)}^{\underline h(t) }\int_{\underline h(t) }^{\infty} \kappa (x-y) \underline u(t, x) {\rm d} y{\rm d}x,$$
then $\mathscr{I} (\epsilon,\mu,h(t))\leq 0,$
which implies  $$\underline h^{\prime}(t)\leq \mu \int_{-\underline h(t)}^{\underline h(t) }\int_{\underline h(t) }^{\infty} \kappa (x-y) \underline u(t, x) {\rm d} y{\rm d}x.$$

Moreover, in view of $(\ref{eq4-14})$, it follows that $$
\underline u(t, \pm \underline h(t))\rightarrow 0, \text { as } t\gg \tilde T+T.$$ 
According to ${\rm (\mathbf{H1})}$,  
we have $$\setlength{\abovedisplayskip}{0pt} %%% 0pt 个人觉得稍妥，可自行设置
 \setlength{\belowdisplayskip}{0pt} \int_{\underline h(t)}^{\infty}\kappa(x-y) {\rm d}y+\int_{-\infty}^{-\underline h(t) }\kappa(x-y) {\rm d}y\leq \epsilon, \text { for } t \gg \tilde{T}+T.$$ 
Direct calculations show 
\[
\begin{aligned}
\underline u_t&=(1-\epsilon)^{2} u_{\epsilon^-, t}
=(1-\epsilon)^{2} \left[d\int_{\mathbb R}\kappa (x-y)u_{\epsilon^-}(t,y){\rm d}y-d ~ u_{\epsilon^-} (t,x) +u_{\epsilon^-}  (f(t,u_{\epsilon^-})-\epsilon) \right]
\\&\leq  d\int_{-\underline h(t)}^{\underline h(t)}\kappa (x-y) \underline u(t,y){\rm d}y-d~\underline u (t,x)  +\underline u  f(t, u_{\epsilon^-}) +\int_{\underline h(t)}^{\infty}\kappa(x-y) \underline u(t,y){\rm d}y
+\int_{-\infty}^{-\underline h(t)}\kappa(x-y) \underline u(t,y){\rm d}y-\epsilon ~\underline u(t,x)
\\&\leq d\int_{-\underline h(t)}^{\underline h(t)}\kappa (x-y) \underline u(t,y){\rm d}y-d~\underline u (t,x)  +\underline u  f(t, \underline u).
\end{aligned}
\]
Hence,  using the Comparison principle to conclude that
\begin{equation}\label{l2}
\begin{aligned}
&\underline h(t+T) \leq \tilde{h}(t), \text { for } t \gg \tilde{T},
\text { and } \underline u(t+T, x) \leq \tilde{u}(t, x), \text { for } t \gg \tilde{T}, -\underline h(t+T)<x<\underline h(t+T).
\end{aligned}
\end{equation}
It follows that
$$
\begin{aligned}
\liminf\limits_{t \rightarrow \infty} \frac{h(t)}{t}&=\liminf\limits_{t \rightarrow \infty} \frac{\tilde{h}(t-T)}{t}  \geq\liminf\limits_{t \rightarrow \infty} \frac{\underline h(t)}{t} \\
& =\liminf\limits_{t \rightarrow \infty} \frac{(1-\epsilon)^2(1-2\epsilon)\mu  \int_{\tilde{T}+T}^t  \int_{-\infty}^{0 }\int_{0}^{\infty} \kappa (x-y) u_{\epsilon^-}(s, x) {\rm d} y{\rm d}x {\rm d}s + \tilde{h}(\tilde{T})}{t} \\
& =(1-\epsilon)^2(1-2\epsilon)  \lim_{t \rightarrow \infty} \frac{ \mu \int_{0}^t  \int_{-\infty}^{0 }\int_{0}^{\infty} \kappa (x-y) u_{\epsilon^-}(s, x) {\rm d} y{\rm d}x {\rm d}s }{t}.
\end{aligned}
$$
For $h_\infty=\infty,$ there is $t^*>\tilde T+T>0$ such that $h(t)-h_\infty<\epsilon,\text { as } t\geq t^*.$
Let $L = h(t^*)$, note that $u_{\epsilon^-}(t,x) \rightarrow \hat {u}^{*}(t,x)$ as $\epsilon \rightarrow 0$ uniformly in $t >0$ and $x\in \mathbb (-L, L)$. Thus,
\begin{equation}\label{eq4-15}
\begin{aligned}
\liminf\limits_{t \rightarrow \infty} \frac{h(t)}{t} &\geq \lim\limits _{t \rightarrow \infty} \dfrac{ \mu \int_{0}^{t} \int_{-\infty}^{0 }\int_{0}^{\infty} \kappa (x-y)\hat u^* (s, x) {\rm d} y{\rm d}x{\rm d}s}{t}
=\lim\limits_{t \rightarrow \infty} \dfrac{ \mu \int_{0}^{t}\int_{-\infty}^{-h(t^*)} \int_{0}^{\infty }  \kappa (x-y)\hat u^* (s) {\rm d} y{\rm d}x{\rm d}s}{t}
\\&+\lim\limits_{t \rightarrow \infty} \dfrac{ \mu \int_{0}^{t}\int_{-h(t^*)}^{0} \int_{0}^{\infty} \kappa (x-y)\hat u^* (s) {\rm d} y{\rm d}x{\rm d}s}{t}
=\lim\limits _{t \rightarrow \infty} \dfrac{ \mu \int_{0}^{t} \int_{-\infty}^{0 }\int_{0}^{\infty} \kappa (x-y)\hat u^* (s) {\rm d} y{\rm d}x{\rm d}s}{t}
\\&= { \mu  \int_{-\infty}^{0 }\int_{0}^{\infty} \kappa (x-y)\hat u ^*{\rm d} y{\rm d}x}
=c^*.
\end{aligned}
\end{equation}
Thus, combining $(\ref{eq4-12})$ and $(\ref{eq4-15})$,  one can see that
\begin{equation}\label{cc}
\lim\limits_{t \rightarrow \infty} \frac{h(t)}{t}=c^* .
\end{equation}
\end{proof}

\begin{proof}[The proof of Theorem~\ref{th02-2}]
Now we intend to show $(\ref{eq-s1}).$

According to $(\ref{l1})$ in {Step~1} of the proof, for any given small $\sigma>0$, there is $T_\sigma>T$ such that
$$
u(t, x) \leq(1-\sigma)^{-2} u_\sigma(t, x), \text { for } t \geq T_\sigma, -\tilde h(t-T) \leq x \leq \tilde{h}(t-T)
$$
where $u_\sigma(t,x)$ is the unique almost periodic solution of $(\ref{eq4-11})$ with $\epsilon$ replaced by $\sigma$ and {$\tilde{h}(t)=h(t+T).$}

Moreover, by $(\ref{l2})$ in Step~2 of the proof, there are positive constants $\underline {T}_\sigma>T$ and $h_\sigma>0$ such that
$$
u(t, x) \geq(1-\sigma)^2 u_{\sigma^{-}}(t, x),\text { for } t \geq \underline {T}_\sigma,  -\underline h(t)\leq  x \leq \underline h(t)
$$
where
$$
\underline h(t)=(1-\sigma)^2(1-2\sigma) \mu \int_{\underline {T}_\sigma}^{t}  \int_{-\infty}^{0}\int_{0}^{\infty} \kappa (x-y) u_{\sigma^-}(s, x) {\rm d} y{\rm d}x {\rm d}s+h_\sigma,
$$
and $u_{\sigma^{-}}$ is the unique almost periodic solution of $(\ref{eq4-14})$ with $\epsilon$ replaced by $\sigma$.

As
$$
\begin{aligned}
&\lim _{\sigma \rightarrow 0}(1-\sigma)^{-2} \mu\int_{-\infty}^{0}\int_{0}^{\infty}\kappa(x-y)u_{\sigma}(t,x){\rm d}y{\rm d}x
=\lim _{\sigma \rightarrow 0}(1-\sigma)^2 (1-2\sigma)\mu\int_{-\infty}^{0}\int_{0}^{\infty}\kappa(x-y)u_{\sigma^-}(t,x){\rm d}y{\rm d}x
=c^*
\end{aligned}
$$
uniformly for $t \geq 0$.
For any $\varepsilon\in(0,c^*)$, there are much small $\sigma_\varepsilon \in(0, \varepsilon)$ and $T_{\varepsilon}>0$ such that 
$$
\left|(1-\sigma_\varepsilon)^{-2}   \mu \int_{0}^{t} \int_{-\infty}^{0}\int_{0}^{\infty}\kappa(x-y)u_{\sigma_\varepsilon}(s, x){\rm d}y{\rm d}x{\rm d}s-c^* t\right|<\frac{\varepsilon}{2} t
$$
and
$$
\left|(1-\sigma_\varepsilon)^2 (1-2\sigma_\varepsilon)   \mu    \int_{0}^{t} \int_{-\infty}^{0}\int_{0}^{\infty}\kappa(x-y)u_{{\sigma_\varepsilon}^-}(s, x){\rm d}y{\rm d}x{\rm d} s-c^* t\right|<\frac{\varepsilon}{2} t
$$
for all $t \geq T_{\varepsilon}$.
%{\color{red} Let $\bar{R}^\sigma=\tilde{R}^\sigma-(1-\sigma)^2 \int_0^{\bar{T}^\sigma} \mu\left(z_\sigma\right)_x(s, 0) d s$. Choose $\bar{T}_\varepsilon>T_\varepsilon$ such that $\bar{R}^{\sigma_\varepsilon}+\frac{\varepsilon}{2} t>0$ for $t \geq \bar{T}_\varepsilon$.}

 Fix $\sigma=\sigma_\varepsilon$ in $u_\sigma$ and $u_{\sigma^-}$. Motivated by the arguments in Step~1 and Step~2,
\iffalse
For $t \geq {T}_\varepsilon$, we get
{\color{red} $$
\begin{aligned}
& \xi(t)-x \geq\left(c^*-\varepsilon\right) t-x+R^{\sigma_\varepsilon}+\frac{\varepsilon}{2} t, \\
& \eta(t)-x \geq\left(c^*-\varepsilon\right) t-x+\bar{R}^{\sigma_\varepsilon}+\frac{\varepsilon}{2} t .
\end{aligned}
$$}      
we have
$$
\lim _{x \rightarrow \infty} u_{{\sigma_{\varepsilon}}^-}(t, x)=\underline{u}_{\sigma_\varepsilon}(t) ~ \text { uniformly for } t \in \mathbb{R}
$$
and
$$
\lim _{x \rightarrow \infty} u_{\sigma_\varepsilon}(t, x)=\overline{u}_{\sigma_\varepsilon}(t) ~ \text { uniformly for } t \in \mathbb{R},
$$
where $\underline{u}_{\sigma_\varepsilon}(t)$ and $\overline {u}_{\sigma_\varepsilon}(t)$ are the unique positive almost periodic solutions of
$$
\left(\underline{u}_{\sigma_\varepsilon}\right)_t=\underline {u}_{\sigma_\varepsilon}\left(f\left(t, \underline{u}_{\sigma_\varepsilon}\right)-\sigma_\varepsilon\right),~
\left(\overline{u}_{\sigma_\varepsilon}\right)_t=\overline {u}_{\sigma_\varepsilon}\left(f\left(t, \overline{u}_{\sigma_\varepsilon}\right)+\sigma_\varepsilon\right),
$$
respectively.
\fi
 we can find 
$$
u_{\sigma_\varepsilon}(t, x) \leq \overline u_{\sigma_\varepsilon}(t), \text { for all } t >0
$$
and
$$
 u_{{\sigma_\varepsilon}^-}(t, x) \geq \underline{u}_{\sigma_\varepsilon}(t)-\varepsilon, \text { for all } t >0 $$
locally uniformly for $x$ in $\mathbb R,$
where $\overline u_{\sigma_\varepsilon}(t)$ and $\underline{u}_{\sigma_\varepsilon}(t)$ are the unique time almost periodic solutions of 
$$
{u_t}= u (f(t,  u)+\sigma_\varepsilon)
$$ 
and 
$$
{u}_t={u}(f(t, {u})-\sigma_\varepsilon),
$$
respectively.

Further, according to $(\ref{cc})$, 
for such $\varepsilon$, there is $\hat T>T$ such that $
 \tilde h(t-T)\geq (c^*-\varepsilon)t$ for $t\geq \hat T.$

Denote $$
\mathscr{I}_1(\varepsilon):=(1-\sigma_\varepsilon)^2(1-2\sigma_\varepsilon) \mu\int_{0}^{\underline T_{\sigma_\varepsilon}}\int_{-\infty}^{0}\int_{0}^{\infty}\kappa(x-y)u_{{\sigma_\varepsilon}^-} (s,x){\rm d}y{\rm d}x{\rm d}s-h_{\sigma_\varepsilon}.$$
Take $\underline T_{\sigma_\varepsilon} \gg1$ such that $\mathscr{I}_1(\varepsilon)>0.$
It follows that if
$
 t \geq \max\left\{T_{\sigma_\varepsilon}+\underline T_{\sigma_\varepsilon}+T_\varepsilon+\hat T,  2\dfrac{\mathscr{I}_1(\varepsilon)}{\varepsilon} \right\} \text { and } 0 \leq | x | \leq\left(c^*-\varepsilon\right) t, 
$
then it follows
$$
u(t, x) \leq\left(1-\sigma_\varepsilon\right)^{-2} u_{{\sigma_\varepsilon}}(t, x) \leq\left(1-\sigma_\varepsilon\right)^{-2} \overline {u}_{\sigma_\varepsilon}(t)
$$
and
$$
u(t, x) \geq\left(1-\sigma_\varepsilon\right)^2 u_{{\sigma_\varepsilon}^-}(t, x) \geq\left(1-\sigma_\varepsilon\right)^2\left(\underline{u}_{\sigma_\varepsilon}(t)-\varepsilon\right).
$$
Let $
T^{**}= \max\left\{T_{\sigma_\varepsilon}+\underline T_{\sigma_\varepsilon}+T_\varepsilon+\hat T,  2\dfrac{\mathscr{I}_1(\varepsilon)}{\varepsilon} \right\} ,$
 it follows
$$
\left(1-\sigma_\varepsilon\right)^2\left(\underline{u}_{\sigma_\varepsilon}(t)-\varepsilon\right) \leq u(t, x) \leq\left(1-\sigma_\varepsilon\right)^{-2} \overline{u}_{\sigma_\varepsilon}(t),
$$
for $t \geq T^{**}$ and $0 \leq | x |\leq\left(c^*-\varepsilon\right) t$.
Moreover,  by $(\ref{l0})$ in Step~1, it implies that
$$
\left(1-\sigma_\varepsilon\right)^2\left(\underline{u}_{\sigma_\varepsilon}(t)-\varepsilon\right)-\overline{u}_{\sigma_\varepsilon}(t) \leq u(t, x)-\hat u^*(t) \leq
\left(1-\sigma_\varepsilon\right)^{-2} \overline {u}_{\sigma_\varepsilon}(t)-\underline{u}_{\sigma_\varepsilon}(t)
$$
Let
$$
\mathscr{I}_2(\varepsilon)=\max \left\{\left|\left(1-\sigma_\varepsilon\right)^2\left(\underline{u}_{\sigma_\varepsilon}(t)-\varepsilon\right)-\overline{u}_{\sigma_\varepsilon}(t) \right|,
\left|\left(1-\sigma_\varepsilon\right)^{-2} \overline {u}_{\sigma_\varepsilon}(t)-\underline{u}_{\sigma_\varepsilon}(t)\right|\right\}.
$$
we obtain that
$
\left|u(t, x)-\hat u^*(t)\right| \leq \mathscr{I}_2(\varepsilon)
$
for all $t \geq {T}^{**}$ and $0 \leq |x |\leq\left(c^*-\varepsilon\right) t$.

Since $\mathscr{I}_2(\varepsilon) \rightarrow 0$ as $\varepsilon \rightarrow 0$, it thus yields
$
\lim\limits_{t \rightarrow \infty} \max\limits _{x \leq\left(c^*-\varepsilon\right) t}\left|u(t, x)-\hat u^*(t)\right|=0.
$
\end{proof}

\begin{proof}[The proof of Theorem~\ref{th1-04}]
   Now we turn to prove $(\ref{eq1-8})$. 

Choose a nonnegative, even function sequence $\{\kappa_n\}$ such that  each $\kappa_n(x)\in C^1$ has  nonempty
compact support, and
\begin{equation}\label{eq4-023}
\kappa_n(x)\leq\kappa_{n+1}(x)\leq\kappa(x), \text{ and }  \kappa_n(x)\rightarrow \kappa(x), \text { in } L^1(\mathbb R) \text{ as } n\rightarrow \infty.
\end{equation}
where $\kappa_n(x)= \kappa(x)\chi_n(x)$  and $\{\chi_n\}$  is a properly smooth cut-off function sequences
such that $\kappa_n(x)$  satisfies $\textbf{(H)}$.

Replace $\kappa(x) $ by $\kappa_n(x)$ in $(\ref{eq1-1})$, we can obtain the following auxiliary problem
\begin{equation}\label{eq4-18}
	\left\{\begin{array}{ll}
		u_{ t}=d \int_{g\left(t\right)}^{h\left(t\right)} 
		\kappa_n\left(x-y\right) u\left(t, y\right) \mathrm{d} y-d 
		u+u f(t,u), & t>0,~
		g\left(t\right)<x<h\left(t\right), \vspace{0.1cm}
		 \\ \vspace{0.1cm}
		u\left(t, h(t)\right)=u(t,g(t))=0, & t > 0, \\ \vspace{0.1cm}
		h^{\prime}\left(t\right)= \mu
		\int_{g\left(t\right)}^{h\left(t\right)} 
		\int_{h\left(t\right)}^{\infty} \kappa_n\left(x-y\right) u\left(t, 
		x\right) \mathrm{d} y\mathrm{d} x, 
		& t 
		> 0, \\ \vspace{0.1cm}
		g^{\prime}\left(t\right)=-\mu
		\int_{g\left(t\right)}^{h\left(t\right)} 
		\int_{-\infty}^{g\left(t\right)} \kappa_n\left(x-y\right) u\left(t, 
		x\right) \mathrm{d} y\mathrm{d} 
		x, & 
		t >0, \\ \vspace{0.1cm}
		u\left(0, x\right)=u_{ 0}\left(x\right),~
		h\left(0\right)=-g\left(0\right)=h_{0}, & x\in[-h_{0},h_0].
	\end{array}\right.
\end{equation}

Let $(u_n; g_n,h_n)$ be the solution of $(\ref{eq4-18})$. Applying the similar arguments in Step 2, when the spreading happens, we can see that 
$$
\lim\limits_{t\rightarrow\infty}h_n(t)=\infty, \text{ for } n\gg1$$ and  $(\ref{eq4-15})$ still holds  for $(\ref{eq4-18})$,  then
$$
\begin{aligned}
\liminf_{t\rightarrow\infty} \dfrac{h_n(t)}{t} &\geq \lim_{t\rightarrow\infty}\dfrac{ \mu \int_{0}^{t} \int_{-\infty}^{0 }\int_{0}^{\infty} \kappa_n (x-y)\hat u^*_n(s, x) {\rm d} y{\rm d}x{\rm d}s}{t}
\\&= \lim_{t\rightarrow\infty}\dfrac{ \mu \int_{0}^{t} \int_{-\infty}^{0 }\int_{0}^{\infty} \kappa_n (x-y)\hat u^*_n(s) {\rm d} y{\rm d}x{\rm d}s}{t}
\\&=  \mu\int_{-\infty}^{0 }\int_{0}^{\infty} \kappa_n (x-y)\hat u^*_n  {\rm d} y{\rm d}x
:=\hat c^*_n,
\end{aligned}
$$
where $\hat u^*_n(t, x)\equiv \hat u^*_n(t) $ is the unique positive time  almost periodic solution of 
\[
 u_t=d\int_{\mathbb R}\kappa_n(x-y) u(t,y){\rm d}y -d ~ u(t,x)+uf(t, u),  ~x\in\mathbb R.\]
Since $(\textbf {H})$ does not hold for $\kappa(x)$, by $(\ref{eq4-023})$, it follows that $\lim\limits_{n\rightarrow\infty}\hat c^*_n=\infty.$
Further, by Lemma~4.1 in~\cite{du2021semi}, we can get that for all $n\gg1$, 
$$
 \liminf\limits_{t\rightarrow\infty} \dfrac{h(t)}{t}\geq \hat c^*_n \text { and } \liminf\limits_{t\rightarrow\infty}\dfrac{-g(t)}{t}\geq \hat c^*_n.$$
 Thus, the conclusion $(\ref{eq1-8})$ holds.
The proof of this theorem has been completed.
\end{proof}

}

\section*{Acknowledges} 
This work is supported by the China Postdoctoral Science Foundation (No. 2022M710426) and the National
Natural Science Foundation of China (No. 12171039 and
12271044).

\bibliography{refernon2}

\end{document}